\newtheorem{lemma}{Lemma}
\newtheorem{theorem}{Theorem}
\newtheorem{definition}{Definition}
\newtheorem{corollary}{Corollary}
\newtheorem{proposition}{Proposition}
\begin{document}
\renewcommand{\refname}{References}
\renewcommand{\proofname}{Proof.}
\renewcommand{\figurename}{Fig.}

\thispagestyle{empty}

\begin{center}
\textbf{\sc The inter-relationship between isomorphisms of commutative and isomorphisms of non-commutative $\log$-algebras}
\end{center}

\vspace{10pt}

\begin{center}
\textbf{\sc R.Abdullaev, A.Azizov}
\end{center}

\vspace{10pt}

\author[R.Abdullaev]{{\bf R.Abdullaev}\protect}

\author[A.Azizov]{{\bf A.Azizov}\protect}

\address{Rustam Zoirovich Abdullaev  
\newline\hphantom{iii} Tashkent university of information technologies,
\newline\hphantom{iii} pr. A.Temur, 108,
\newline\hphantom{iii} 100200, Tashkent, Uzbekistan}%
\email{arustambay@yandex.com}%

\address{Azizkhon Nodirovich Azizov  
\newline\hphantom{iii} National university of Uzbekistan,
\newline\hphantom{iii} st. Universitet, 4,
\newline\hphantom{iii} 100029, Tashkent, Uzbekistan}%
\email{azizov.07@mail.ru}%

\sloppy
\noindent{\bf{Abstract.}} This paper establishes a necessary and sufficient condition for the coincidence of non-commutative $\log$-algebras constructed from different exact normal semifinite traces. Consequently, we provide a criterion for the isomorphism of $\log$-algebras built on non-commutative von Neumann algebras with different exact normal semifinite traces. Additionally, we demonstrate a connection between the isomorphism of non-commutative $\log$-algebras and the isomorphism of the corresponding $\log$-algebras constructed on the center of these von Neumann algebras. Furthermore, we present a necessary and sufficient condition for the isomorphism of $\log$-algebras derived from different von Neumann algebras of type $I_n$.
\medskip

\noindent{\bf Keywords:} isomorphisms, log-algebras, $\sigma$-finite measures, von Neumann algebras, faithful normal semi-finite trace.

\section{Introduction}
\sloppy
In the work \cite{1}, commutative and noncommutative log-algebras (logarithmically integrable elements associated with von Neumann algebras) were introduced and their properties were studied. In particular, it was shown that such an algebra is a complete metric space, with algebraic operations being continuous for the set of variables. The works \cite{2}, \cite{3} consider commutative $log$-algebras constructed with finite and $\sigma$-finite measures, respectively, and study their algebraic and topological properties.

This paper presents a necessary and sufficient condition for the coincidence of non-commutative $\log $-algebras constructed from different exact normal semifinite traces. Consequently, we prove a criterion for the isomorphism of $\log $-algebras constructed on non-commutative von Neumann algebras from different exact normal semifinite traces. We also establish a connection between the isomorphism of non-commutative $\log $-algebras and the isomorphism of the corresponding $\log $-algebras constructed on the center of this von Neumann algebra. In addition, we establish a necessary and sufficient condition for the isomorphism of $\log $-algebras constructed from different von Neumann algebras of type $I_n$. In particular, the isomorphism is reduced to the boundedness of the corresponding numerical sequences.

These $\log$-algebras are interesting because they are close to the class of Nevanlinna functions holomorphic in the disk and integrable with the logarithm on the disk's boundary of functions \cite{12}. Since the logarithm of a holomorphic function is a subharmonic function, the Nevanlinna class establishes a connection between holomorphic functions and potential theory. Besides, the operators integrable with respect to states of any degree $p\in [1,+\infty)$ were also considered in \cite{13}.

Let us consider non-commutative algebras constructed on von Neumann algebras. All necessary definitions, notations, and results of the theory of von Neumann algebra are taken from \cite{4}, and the theory of non-commutative integration from \cite{5}. Let $M$ be a von Neumann algebra with exact normal semi-finite trace $\tau $. By $L_{0} \left(M,\tau \right)$ we denote the measurable operators associated with $M$. Consider the space
\[L_{\log } \left(M,\tau \right)=\left\{T\in L_{0} \left(M,\tau \right):\tau \left(\log \left(1+\left|T\right|\right)\right)<\infty \right\}\]
in the sense of \cite{1}. $L_{\log } \left(M,\tau \right)$ is an \textit{$F$}-space with respect to the $F$-norm
\[\left\| T\right\| _{\log } =\int _{0}^{1}\log \left(1+\mu _{x} (T)\right)dx \]
in the case of finite $\tau $ and
\[\left\| T\right\| _{\log } =\int _{0}^{\infty }\log \left(1+\mu _{x} (T)\right)dx \]
in the case of semi-finite $\tau $, where $\mu _{x} (T)$ is a permutation of the operator $T$ \cite{5}.

Let $\mu $ and $\nu $ be exact normal semifinite traces on a von Neumann algebra $M$, and let $h=\frac{d\nu }{d\mu } $ denote the Radon-Nikodym derivative of $\nu $ with respect to $\mu $, i.e., a central positive operator adjoint to $M$ such that $\nu (x)=\mu (hx)$ holds for all $x\in M$. Moreover, $h^{-1} $ exists and $h^{-1} \in Z(L_0(M,\nu))$ \cite[Theorem 1, (ii) on p. 63, Corollary, (ii) on p. 65]{14}, where $Z(A)$ or $Z_A$ denotes the center of $A$. Now from the equality $\nu (x)=\mu (hx)$ we obtain $\nu (h^{-1} x)=\mu (h^{-1} hx)=\mu (x)$, i.e. $h^{-1} $ is the Radon-Nikodym derivative of the trace $\mu $ with respect to $\nu $.

\begin{lemma}[\cite{1}] Let $S,T\in S(M,\mu )$. Then

a) From $\left\| T\right\| _{\log } >0$, follows $T\ne 0$;

b) $\left\| \alpha T\right\| _{\log } \le \left\| T\right\| _{\log } $ for all numbers $\alpha $, $\left|\alpha \right|\le 1$;

c) If $T\in L_{\log } \left(M,\mu \right)$, then ${\mathop{\lim }\limits_{\alpha \to 0}} \left\| \alpha T\right\| _{\log } =0$;

d) $\left\| S+T\right\| _{\log } \le \left\| S\right\| _{\log } +\left\| T\right\| _{\log } $;

e) $\left\| S\cdot T\right\| _{\log } \le \left\| S\right\| _{\log } + \left\| T\right\| _{\log } $.
\end{lemma}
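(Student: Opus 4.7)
I plan to derive each item from elementary manipulations of the generalized singular numbers $\mu_x(\cdot)$ (see \cite{5}) together with the scalar inequalities $\log(1+a+b)\le\log(1+a)+\log(1+b)$ and $\log(1+ab)\le\log(1+a)+\log(1+b)$ for $a,b\ge 0$, both of which follow from $(1+a)(1+b)\ge 1+a+b$ and $(1+a)(1+b)\ge 1+ab$.

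Parts (a)--(c) are immediate. For (a), $\|T\|_{\log}>0$ implies $\log(1+\mu_x(T))>0$ on a set of positive measure, hence $\mu_x(T)>0$ somewhere, which forces $T\ne 0$ (since $T=0$ iff $\mu_x(T)\equiv 0$). For (b), the scaling identity $\mu_x(\alpha T)=|\alpha|\mu_x(T)$ together with $|\alpha|\le 1$ yields $\mu_x(\alpha T)\le\mu_x(T)$, and monotonicity of $\log$ closes the argument. For (c), the family $\log(1+|\alpha|\mu_x(T))$ decreases pointwise to $0$ as $\alpha\to 0$ and is dominated by the integrable function $\log(1+\mu_x(T))$ since $T\in L_{\log}(M,\mu)$, so Lebesgue's dominated convergence theorem applies.

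For (d) and (e), I would begin from the standard non-commutative bounds $\mu_{s+t}(S+T)\le\mu_s(S)+\mu_t(T)$ and $\mu_{s+t}(ST)\le\mu_s(S)\mu_t(T)$ of \cite{5}, combined with the two scalar log-inequalities above. The cleanest implementation of (d) is to rewrite $\|T\|_{\log}=\tau(\log(1+|T|))$ and then invoke the Rotfel'd-type concavity inequality $\tau(f(|S+T|))\le\tau(f(|S|))+\tau(f(|T|))$, valid for non-negative concave $f$ with $f(0)=0$, applied to $f(x)=\log(1+x)$. Part (e) is treated analogously by setting $s=t=u/2$ in the submultiplicative singular-number bound, taking logarithms, applying $\log(1+ab)\le\log(1+a)+\log(1+b)$, and integrating.

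The main obstacle I anticipate is ensuring that the constants in (d) and (e) are exactly $1$ rather than a larger quasi-norm constant. A direct substitution $s=t=u/2$ in the singular-number estimates followed by the change of variables $v=u/2$ in the defining integral produces a spurious factor of $2$; removing this cleanly in (d) forces the use of a Rotfel'd--Bourin type concave operator inequality (which delivers the sharp constant $1$) rather than the pointwise $\mu_x$ estimate alone. The analogous sharp statement for (e) would require either exploiting monotonicity of $\mu_x$ more carefully over the region $\{(s,t):s+t=u\}$ or applying the multiplicative counterpart of the concave trace inequality to $f(x)=\log(1+x)$ composed with the polar decomposition of $ST$.
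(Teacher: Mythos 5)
The paper does not prove this lemma at all --- it is quoted verbatim from \cite{1} --- so your proposal can only be judged against the argument in that reference. Parts (a)--(c) of your plan are complete and correct. For (d) you have correctly diagnosed that the pointwise bound $\mu_{s+t}(S+T)\le\mu_s(S)+\mu_t(T)$ with $s=t=u/2$ only yields the quasi-triangle inequality with constant $2$, and your proposed repair --- the Rotfel'd--Bourin concave trace inequality $\tau(f(|S+T|))\le\tau(f(|S|))+\tau(f(|T|))$ for $f(x)=\log(1+x)$ --- is precisely the content of the sharp statement; in \cite{1} it is obtained in the equivalent determinant form $\Delta(1+|S+T|)\le\Delta(1+|S|)\,\Delta(1+|T|)$, using $|S+T|\le U^*|S|U+V^*|T|V$, the operator inequality $1+A+B\le(1+A)^{1/2}(1+B)(1+A)^{1/2}$ for $A,B\ge 0$, and multiplicativity of the Fuglede--Kadison determinant. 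Be aware that invoking Rotfel'd here is not a reduction: in the semifinite von Neumann algebra setting it \emph{is} statement (d), so you must either prove it or supply a precise reference for that setting.

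The genuine gap is in (e). As you yourself note, substituting $s=t=u/2$ into $\mu_{s+t}(ST)\le\mu_s(S)\mu_t(T)$ and integrating produces only $\left\|ST\right\|_{\log}\le 2\left\|S\right\|_{\log}+2\left\|T\right\|_{\log}$, and neither of your suggested escapes is workable as stated: no choice of a splitting $s+t=u$ removes the factor $2$ from a purely pointwise estimate (already for matrices $s_2(AB)\le s_1(A)s_1(B)$ cannot be improved to $s_2(A)s_2(B)$), and there is no ``multiplicative concave trace inequality applied to the polar decomposition of $ST$'' to appeal to. The standard resolution is logarithmic submajorization of products (the semifinite Horn inequality, cf. \cite{5} and Fack--Kosaki): $\int_0^t\log\mu_s(ST)\,ds\le\int_0^t\log\bigl(\mu_s(S)\mu_s(T)\bigr)\,ds$ for all $t>0$, with \emph{equal} indices $s$ on both sides. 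Since $u\mapsto\log(1+e^u)$ is increasing and convex, the Hardy--Littlewood--P\'{o}lya theorem upgrades this to $\int_0^\infty\log(1+\mu_s(ST))\,ds\le\int_0^\infty\log(1+\mu_s(S)\mu_s(T))\,ds$, after which your scalar inequality $\log(1+ab)\le\log(1+a)+\log(1+b)$ finishes the proof with constant $1$. Without this step (or an equivalent determinant argument $\Delta(1+|ST|)\le\Delta(1+|S|)\,\Delta(1+|T|)$), part (e) of your proposal does not close.
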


From properties a), b), c), d), it follows that the function $\left\| \cdot \right\| _{\log } $ is an $F$-norm on the $F$-space $L_{\log } \left(M,\mu \right)$, and from property e) we obtain that the $F$-space $L_{\log } \left(M,\mu \right)$ is a topological algebra with respect to the topology generated by the metric $\rho (S,T)=\left\| S-T\right\| _{\log } $.

\section{Criterion for isomorphism of non-commutative $log$-algebras}

Let $M$ be a von Neumann algebra with exact normal semifinite traces $\mu $ and $\nu$, $h=\frac{d\nu }{d\mu } $ the Radon-Nikodym derivative.
\begin{theorem}\label{th1} $L_{\log } \left(M,\mu \right)\subset L_{\log } \left(M,\nu \right)$ if and only if $h\in M$.
\end{theorem}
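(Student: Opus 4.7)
The plan is to prove the two implications separately, with the substantive content in the $(\Rightarrow)$ direction.

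For $(\Leftarrow)$, if $h \in M$ then $\|h\| < \infty$; since $h$ is central and positive, $h^{1/2}xh^{1/2} \leq \|h\|\,x$ for every positive $x \in M$, so the identity $\nu(x) = \mu(hx) = \mu(h^{1/2}xh^{1/2})$ gives $\nu(x) \leq \|h\|\mu(x)$, and this extends by normality to $\nu(A) \leq \|h\|\mu(A)$ for every positive operator $A$ affiliated with $M$. Applied to the spectral cutoffs $\chi_{(\lambda,\infty)}(|T|)$ it shows that $\mu$-measurability of $T$ implies $\nu$-measurability, and applied with $A = \log(1+|T|)$ it yields $\nu(\log(1+|T|)) \leq \|h\|\mu(\log(1+|T|))$. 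Hence $L_{\log}(M,\mu) \subset L_{\log}(M,\nu)$.

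For $(\Rightarrow)$ I argue by contrapositive. Suppose $h \notin M$, so $h$ is unbounded; then the central spectral projections $e_n = \chi_{[n,n+1)}(h)$ are nonzero for an unbounded set of indices $n$, and I may extract a subsequence $n_k \to \infty$ as sparse as desired with $e_{n_k} \neq 0$. By semifiniteness of $\mu$, pick nonzero subprojections $f_k \leq e_{n_k}$ with $a_k := \mu(f_k) \in (0,\infty)$; no control over the exact value of $a_k$ is required. I then choose positive amplitudes $c_k$ and the growth of $n_k$ so that
\[
b_k := \log(1+c_k)\,a_k \quad\text{satisfies}\quad \sum_k b_k < \infty,\ \sum_k n_k b_k = \infty;
\]
concretely, $b_k = k^{-2}$ with $c_k = \exp(1/(k^2 a_k)) - 1$ and $n_k \geq k^3$ does the job. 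Set $T = \sum_k c_k f_k$, a positive operator affiliated with $M$ whose spectral cutoffs $\chi_{(\lambda,\infty)}(T) = \sum_{k:\,c_k>\lambda} f_k$ are finite sums of finite-$\mu$-trace projections, hence $T$ is $\mu$-measurable. Then $\log(1+T) = \sum_k \log(1+c_k)\,f_k$, so
\[
\mu(\log(1+T)) = \sum_k b_k < \infty,
\]
placing $T$ in $L_{\log}(M,\mu)$; whereas centrality of $h$ together with $h f_k \geq n_k f_k$ (from $f_k \leq \chi_{[n_k,n_k+1)}(h)$) gives
\[
\nu(\log(1+T)) = \mu\bigl(h \log(1+T)\bigr) \geq \sum_k n_k b_k = \infty,
\]
so $T \notin L_{\log}(M,\nu)$, contradicting the assumed inclusion.

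The main obstacle is the contrapositive construction, which must simultaneously exploit the unboundedness of the central positive operator $h$ (via the spectral theorem in $Z(M)$) and the semifiniteness of $\mu$ to place orthogonal finite-trace supports on arbitrarily high spectral levels of $h$; the free choice of scalar amplitudes $c_k$ then lets one balance a convergent series against a divergent weighted one independently of the arbitrary trace values $a_k$. The $(\Leftarrow)$ direction, by contrast, collapses to a single monotonicity estimate $\nu \leq \|h\|\mu$ on positive operators affiliated with $M$.
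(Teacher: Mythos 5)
Your proof is correct and follows essentially the same route as the paper: the forward direction is the same $\nu \le \|h\|\,\mu$ estimate, and your contrapositive construction (finite-trace supports $f_k$ on the spectral levels $\chi_{[n_k,n_k+1)}(h)$, with amplitudes chosen so that $\log(1+c_k)\mu(f_k)=k^{-2}$) is exactly the paper's choice of sets $M_{n_k}$ and function $f=e^{g}-1$ with $g=1/(k^{2}\mu(M_{n_k}))$ on $M_{n_k}$, merely phrased in spectral-projection language rather than after the paper's reduction to the commutative function-space picture.
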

\begin{proof} Let $h\in M$ and $f\in L_{\log } \left(M,\mu \right)$, i.e.
\begin{equation}\label{1'}
\int_{\Omega }\log \left(1+\left|f(z)\right|\right)d\mu <\infty.
\end{equation} Since $h$ is a central element, without loss of generality, we can assume that we are considering the commutative case, i.e., functional $\log $-algebras. Therefore, we can assume that $L_{\log } \left(M,\mu \right)=L_{\log } \left(\Omega ,\mu \right)$. Then, by virtue of the Radon-Nikodym theorem and \eqref{1'} we have $$\int _{\Omega }\log \left(1+\left|f(z)\right|\right)d\nu =\int _{\Omega }\left(h\log \left(1+\left|f(z)\right|\right)\right)d\mu \le $$ $$\leq \left\| h\right\| _{\infty } \int _{\Omega }\log \left(1+\left|f(z)\right|\right)d\mu <\infty.$$ Hence $f\in L_{\log } \left(M,\nu \right)$, i.e. $L_{\log } \left(M,\mu \right)\subset L_{\log } \left(M,\nu \right)$.

Conversely, let $0<h\in Z(L_{0} \left(M,\mu \right))\setminus M$. As was said above, we can assume that $L_{1} \left(M,\mu \right)=L_{1} \left(\Omega ,\mu \right)$ and $M=L_{\infty } \left(\Omega ,\mu \right)$, i.e. $M$ is a function space. Then for some projection $p$ $(\mu(p)<\infty)$ we have $0<hp\in L_{1} \left(M,\mu \right)\setminus M$. Now we can construct an infinite sequence of sets $$M_{n} =\left\{z\in \Omega ': n\le h(z)p\le n+1\right\}$$ where $\Omega '\subset \Omega $, $\mu (\Omega ')<\infty $. This set is infinite because $hp$ is unbounded, and for any $n$, we have $M_n\neq \emptyset$. Now we consider an infinite subset of natural numbers $N_{0} =\left\{n\in N:\mu (M_{n} )>0\right\}$. We rename the elements of the set $N_{0} $ as follows $N_{0} =\left\{n_{1} ,n_{2} ,\ldots \right\}$, $n_{k} <n_{k+1} $.

Consider the function
\[g(z)=\left\{\frac{1}{k^{2} \mu (M_{n_{k} } )} ;z\in M_{n_{k} } \right\}.\]
For $z\in \Omega \backslash \cup _{k} M_{n_{k} } $ we have $g(z)=0$. Let's put $f(z)=e^{g(z)} -1$, then \begin{equation} \label{eq1} \int _{\Omega }\log \left(1+\left|f(z)\right|\right)d\mu =\sum _{k=1}^{\infty }\frac{\mu \left(M_{n_{k} } \right)}{k^{ 2} \mu \left(M_{n_{k} } \right)} =\sum _{k=1}^{\infty }\frac{1}{k^{2} } <\infty.
\end{equation} However \[\int_{\Omega } \log (1+|f(z)|)d\nu =\nu (\log (1+|f(z)|))=\mu (h(z)\log (1+f(z)))\]
\begin{equation} \label{eq2}
=\mu \left(hg\right)\ge \sum _{k=1}^{\ infty }\frac{n_{k} \cdot \mu \left(M_{n_{k} } \right)}{k^{2} \mu \left(M_{n_{k} } \right)} =\sum _{k=1}^{\infty }\frac{n_{k} }{k^{2} } \ge \sum _{k=1}^{\infty }\frac{1}{k} =\infty .
\end{equation}
From \eqref{eq1} and \eqref{eq2} it follows that $f\in L_{\log } \left(M,\mu \right)$ and $f\notin L_{\log } \left(M,\nu \right)$, i.e. $L_{\log } \left(M,\mu \right)$ is not a subset of $L_{\log } \left(M,\nu \right)$, for $h\in L_{0} \left(M,\mu \right)\backslash M$.

So from $L_{\log } \left(M,\mu \right)\subset L_{\log } \left(M,\nu \right)$ it follows $h\in M$.
\end{proof}

From theorem \ref{th1} we obtain

\begin{corollary}\label{cor1} $L_{\log } \left(M,\mu \right)=L_{\log } \left(M,\nu \right)$ if and only if $h,h^{-1} \in M$.
\end{corollary}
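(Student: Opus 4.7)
The plan is to deduce this corollary directly from Theorem~\ref{th1} applied twice, exploiting the symmetry between $\mu$ and $\nu$. The key observation, already recorded in the paragraph preceding Lemma~1, is that $h^{-1} = \frac{d\mu}{d\nu}$; that is, if $h$ is the Radon-Nikodym derivative of $\nu$ with respect to $\mu$, then $h^{-1}$ plays the analogous role with the roles of the two traces interchanged.

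First I would handle the ``if'' direction. Assume $h, h^{-1} \in M$. Applying Theorem~\ref{th1} to the pair $(\mu,\nu)$ with derivative $h \in M$, I obtain $L_{\log}(M,\mu) \subset L_{\log}(M,\nu)$. Then applying Theorem~\ref{th1} again, but with the roles of $\mu$ and $\nu$ swapped, so that the relevant Radon-Nikodym derivative is $h^{-1} \in M$, I obtain $L_{\log}(M,\nu) \subset L_{\log}(M,\mu)$. The two inclusions together give the desired equality.

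For the ``only if'' direction, assume $L_{\log}(M,\mu) = L_{\log}(M,\nu)$. In particular, $L_{\log}(M,\mu) \subset L_{\log}(M,\nu)$, so Theorem~\ref{th1} applied to $(\mu,\nu)$ yields $h \in M$. Similarly, $L_{\log}(M,\nu) \subset L_{\log}(M,\mu)$, so Theorem~\ref{th1} applied to $(\nu,\mu)$, where the associated derivative is $h^{-1}$, yields $h^{-1} \in M$.

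There is no real obstacle here: the entire content of the corollary is already encoded in Theorem~\ref{th1}, and the only point that needs to be acknowledged is the symmetric role of $h$ and $h^{-1}$ as mutually inverse Radon-Nikodym derivatives. The proof is therefore essentially a one-line double application of Theorem~\ref{th1}.
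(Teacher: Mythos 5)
Your proof is correct and matches the paper's intent exactly: the paper derives the corollary as an immediate consequence of Theorem~\ref{th1} (stating only ``From theorem \ref{th1} we obtain''), relying on the fact, recorded in the introduction, that $h^{-1}=\frac{d\mu}{d\nu}$. Your double application of Theorem~\ref{th1} in both directions is precisely the argument being left implicit.
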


Let $M$ and $N$ be non-commutative von Neumann algebras with faithful normal semifinite traces $\mu $ and $\nu $, respectively.

\begin{definition}\label{def1}
We call traces $\mu $ and $\nu $ \textit{$\alpha$}-equivalent if there exists a $*$-isomorphism $\alpha :M\to N$ such that one of the following equivalent conditions holds:

$(i).$ $L_{\log } \left(N,\nu \right)=L_{\log } \left(N,\mu \circ \alpha ^{-1} \right);$

$(ii).$ $\frac{d\nu }{d\mu \circ \alpha ^{-1} } ,\frac{d\mu \circ \alpha ^{-1} }{d\nu } \in N.$
\end{definition}
The functional $\mu \circ \alpha ^{-1} $ is a faithful normal semifinite trace on the von Neumann algebra $N$. It follows from Corollary \ref{cor1} that $(i)$ and $(ii)$ are equivalent.

\begin{theorem}\label{th2} The algebras $L_{\log } \left(M,\mu \right)$ and $L_{\log } \left(N,\nu \right)$ are isomorphic if and only if $\mu $ and $\nu $ are \textit{$\alpha$}-equivalent.
\end{theorem}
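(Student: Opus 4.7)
The plan is to prove sufficiency by direct transport and necessity by reconstructing a $*$-isomorphism $\alpha:M\to N$ out of the given log-algebra isomorphism, so that Corollary \ref{cor1} can be invoked on $N$ to finish.

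For sufficiency, suppose $\mu$ and $\nu$ are $\alpha$-equivalent with $*$-isomorphism $\alpha:M\to N$ as in Definition \ref{def1}. I would extend $\alpha$ in the standard way (via spectral projections) to a $*$-isomorphism $\tilde\alpha:L_0(M,\mu)\to L_0(N,\mu\circ\alpha^{-1})$ between the algebras of measurable operators. Because $\mu\circ\alpha^{-1}\circ\tilde\alpha=\mu$ on $M$, the generalized singular numbers satisfy $\mu_x(\tilde\alpha(T))=\mu_x(T)$, so $\tilde\alpha$ is an isometry for $\|\cdot\|_{\log}$ and maps $L_{\log}(M,\mu)$ onto $L_{\log}(N,\mu\circ\alpha^{-1})$. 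By condition $(i)$ of Definition \ref{def1} the target equals $L_{\log}(N,\nu)$, which gives the required isomorphism.

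For necessity, start from a $*$-algebra isomorphism $\Phi:L_{\log}(M,\mu)\to L_{\log}(N,\nu)$. The first key observation is that $\log(1+p)=(\log 2)\,p$ for any projection $p$, so the projections of $L_{\log}(M,\mu)$ are precisely the $\mu$-finite projections of $M$. Hence $\Phi$ restricts to an orthoisomorphism $P^{fin}_{\mu}(M)\to P^{fin}_{\nu}(N)$ of finite-trace projection lattices; using semifiniteness of both traces, every projection is a supremum of finite-trace subprojections, and the map extends canonically by suprema to an orthoisomorphism of the full projection lattices $P(M)\to P(N)$. A Dye-type theorem then delivers a $*$-isomorphism $\alpha:M\to N$ implementing this lattice map. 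Extending $\alpha$ to $\tilde\alpha$ on measurable operators as in the sufficiency step, both $\tilde\alpha$ and $\Phi$ are $*$-homomorphisms agreeing on $P^{fin}_{\mu}(M)$, hence on the $*$-algebra they generate, which is $\|\cdot\|_{\log}$-dense in $L_{\log}(M,\mu)$; a spectral-approximation argument then forces $\tilde\alpha=\Phi$ on the entire log-algebra. Consequently $L_{\log}(N,\mu\circ\alpha^{-1})=\Phi(L_{\log}(M,\mu))=L_{\log}(N,\nu)$, and Corollary \ref{cor1} applied to $N$ with the pair of traces $\nu$ and $\mu\circ\alpha^{-1}$ yields condition $(ii)$, giving $\alpha$-equivalence.

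The main obstacle will be the reconstruction of $\alpha$ from the lattice isomorphism on finite-trace projections: Dye's theorem requires additional care if $M$ admits a type $I_2$ direct summand, and one must separately justify, via a density argument in the $\log$-topology, that the extension $\tilde\alpha$ really coincides with $\Phi$ beyond the level of projections. A cleaner route, if it can be pushed through, is to extend $\Phi$ itself continuously to $L_0(M,\mu)\to L_0(N,\nu)$ and read off $\alpha$ as the restriction of this extension to the bounded part $M$; this would avoid Dye's theorem but shifts the difficulty to proving the required continuity.
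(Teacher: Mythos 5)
Your sufficiency argument is essentially the paper's: transport $L_{\log}(M,\mu)$ by the canonical extension of $\alpha$ onto $L_{\log}(N,\mu\circ\alpha^{-1})$ and invoke condition $(i)$ of Definition~\ref{def1}. (The paper justifies the transport via the identity $\alpha(\log(1+|f|))=\log(1+\alpha(|f|))$ rather than via invariance of the singular value function, but the content is the same.) The necessity direction is where you diverge, and where the real issues sit. The paper's route is much more direct: it observes that a $*$-isomorphism $\alpha'$ of the log-algebras carries bounded elements to bounded elements, so its restriction to (the finite part of) $M$ already \emph{is} the desired $*$-isomorphism $\alpha:M\to N$, and then cites \cite[Proposition 3]{6} for the identity $\alpha'(L_{\log}(M,\mu))=L_{\log}(N,\mu\circ\alpha^{-1})$, which combined with $\alpha'(L_{\log}(M,\mu))=L_{\log}(N,\nu)$ gives $\alpha$-equivalence. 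You instead reconstruct $\alpha$ from the lattice of finite-trace projections via a Dye-type theorem.

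That detour has two concrete gaps beyond the type $I_2$ caveat you flag. First, Dye's theorem produces only a Jordan $*$-isomorphism, i.e.\ a direct sum of a $*$-isomorphism and a $*$-anti-isomorphism; an anti-isomorphic summand cannot in general be converted into an isomorphism, so you do not yet have the $\alpha:M\to N$ that Definition~\ref{def1} requires. To exclude the anti-isomorphic part you would have to feed back the multiplicativity of $\Phi$ on products of non-commuting finite-trace projections --- at which point you are essentially re-deriving what the paper gets for free by restricting $\Phi$ to bounded elements. Second, your identification $\tilde\alpha=\Phi$ rests on agreement on a $\|\cdot\|_{\log}$-dense $*$-subalgebra plus continuity; $\tilde\alpha$ is continuous (it is a log-isometry), but $\Phi$ is a priori only an algebraic $*$-isomorphism, and its continuity in the log-topology is not established (nor obviously automatic). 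Without that, density gives you nothing, and the final step $\Phi(L_{\log}(M,\mu))=L_{\log}(N,\mu\circ\alpha^{-1})$ --- which is exactly what you need before applying Corollary~\ref{cor1} --- is unsupported. To be fair, the paper's own treatment of both points (preservation of boundedness, and the appeal to \cite[Proposition 3]{6}) is terse, and when the traces are infinite $M\not\subset L_{\log}(M,\mu)$, so "restricting $\alpha'$ to $M$" itself needs care; but your plan replaces one unproved identification with a longer chain containing the same unproved identification plus the Jordan/anti-isomorphism obstruction.
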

\begin{proof} Let $\mu $ and $\nu $ be \textit{$\alpha$}-equivalent, i.e. there exists an isomorphism $\alpha :M\to N$ such that condition $(i)$ from \mbox{definition \ref{def1}} holds
\begin{equation} \label{eq3}
L_{\log } \left(N,\mu \circ \alpha ^{-1} \right)=L_{\log } \left(N,\nu \right).
\end{equation}

From \cite[Proposition 3]{2} we have $\alpha(\log(1+|f|))=\log(1+\alpha(|f|))$ for all $f \in L_0(M)$ and for commutative von Neumann algebras $M$. Since this equality involves only one element, then this equality is also true for non-commutative von Neumann algebras $M$.

\sloppy
It follows that $\alpha(f) \in L_{\log}(N,\mu \circ \alpha ^{-1})$ for all $f \in L_{\log}(M,\mu)$. Similarly, using the *-isomorphism $\alpha^{-1}$ we obtain $\alpha^{-1}(h) \in L_{\log}(M,\mu)$ for all $h \in L_{\log}(N,\mu \circ \alpha ^{-1})$. Therefore
\begin{equation} \label{eq4}
\alpha(L_{\log}(M,\mu))=L_{\log}(N,\mu \circ \alpha ^{-1}).
\end{equation}

From \eqref{eq3} and \eqref{eq4} we obtain $$\alpha (L_{\log } \left(M,\mu \right))=L_{\log } \left(N,\mu \circ \ alpha ^{-1} \right)=L_{\log } \left(N,\nu \right),$$ i.e. $L_{\log } \left(M,\mu \right)$ and $L_{\log } \left(N,\nu \right)$ are isomorphic.

Conversely, let $\alpha '$ be an *-isomorphism from $L_{\log } \left(M,\mu \right)$ to $L_{\log } \left(N,\nu \right)$, then
\begin{equation} \label{eq5}
\alpha '\left(L_{\log } \left(M,\mu \right)\right)=L_{\log } \left(N,\nu \right) .
\end{equation}

Then $\alpha '$ maps bounded elements from $L_{\log } \left(M,\mu \right)$ to bounded elements from $L_{\log } \left(N,\nu \right)$, i.e. the restriction of $\alpha '$ to $M$ is a *-isomorphism from $M$ to $N$. Moreover, the *-isomorphism from $M$ to $N$ satisfies the condition of \textit{$\alpha$}-equivalence of the traces of $\mu $ and $\nu .$ From \cite[Proposition 3]{6} we have
\begin {equation} \label{eq6}
L_{\log } \left(N,\mu \circ \alpha ^{-1} \right)=\alpha '\left(L_{\log } \left(M,\mu \right)\right)
\end{equation}
From \eqref{eq5} and \eqref{eq6} we obtain
\[L_{\log } \left(N,\mu \circ \alpha ^{-1} \right )=L_{\log } \left(N,\nu \right),\]
i.e. $\mu $ and $\nu $ are $\alpha$-equivalent.
\end{proof}

\section{Isomorphisms of non-commutative $\log$-algebras that can be reduced to isomorphisms of the corresponding commutative $\log$-algebras}

Let $\mu $ and $\nu $ be exact normal semifinite traces on a von Neumann algebra $M$. If $\alpha $ is some automorphism of $M$, then the functional $\nu \circ \alpha ^{-1} $ is also an exact normal semifinite trace. Denote by $d\mu /d\nu \circ \alpha ^{-1} $ the Radon--Nikodym derivative of the trace $\mu $ with respect to the trace $\nu \circ \alpha ^{-1} $.

The set of $*$ - automorphisms $\alpha $ of the von Neumann algebra $M$ satisfying the conditions
$\frac{d\mu }{d\nu \circ \alpha ^{-1}}, \ \frac{d\nu \circ \alpha ^{-1}}{d\mu}$ are bounded, we denote by $Aut(M\left(\mu ;\nu \right))$.

Now we can reformulate Theorem \ref{th2} (for the case $M=N$), establishing a criterion for $*$-isomorphism of non-commutative $\log$-algebras.

\begin{theorem}\label{th3}
Let $\mu $ and $\nu $ be faithful normal semifinite traces on a von Neumann algebra $M$. The following conditions are equivalent:

$(i)$ log algebras $L_{\log } \left(M;\mu \right)$ and $L_{\log } \left(M;\nu \right)$ are *-isomorphic;

$(ii)$ $Aut\, M\left(\mu ;\nu \right)\ne \emptyset $.
\end{theorem}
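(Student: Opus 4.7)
The plan is to deduce Theorem \ref{th3} from Theorem \ref{th2} specialized to the case $N = M$. With $N = M$, a $*$-isomorphism $\alpha : M \to N$ in Theorem \ref{th2} is exactly a $*$-automorphism of $M$, so condition $(i)$ combined with Definition \ref{def1} translates into the existence of some $\alpha \in Aut(M)$ with $\frac{d\nu}{d\mu \circ \alpha^{-1}}, \frac{d\mu \circ \alpha^{-1}}{d\nu} \in M$. It therefore suffices to show that this condition is equivalent to $Aut\,M(\mu;\nu) \ne \emptyset$, that is, to the existence of some $\beta \in Aut(M)$ with $\frac{d\mu}{d\nu \circ \beta^{-1}}, \frac{d\nu \circ \beta^{-1}}{d\mu} \in M$.

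The bridge between these two formulations is the substitution $\beta := \alpha^{-1}$, supplemented by the transformation rule for Radon--Nikodym derivatives under composition with an automorphism. Writing $h := \frac{d\nu}{d\mu \circ \alpha^{-1}}$, the defining identity $\nu(y) = (\mu \circ \alpha^{-1})(hy) = \mu(\alpha^{-1}(h)\,\alpha^{-1}(y))$ evaluated at $y = \alpha(x)$ gives
\[
\nu(\alpha(x)) \;=\; \mu\bigl(\alpha^{-1}(h)\,x\bigr) \qquad (x \in M^{+}),
\]
so $\frac{d(\nu \circ \alpha)}{d\mu} = \alpha^{-1}(h)$. Because $\alpha^{-1}$ carries $M$ onto $M$ and preserves inversion, $h \in M$ iff $\alpha^{-1}(h) \in M$, and likewise for $h^{-1}$. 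Consequently, with $\beta := \alpha^{-1}$ one has $\frac{d\nu \circ \beta^{-1}}{d\mu} = \alpha^{-1}(h) \in M$ and $\frac{d\mu}{d\nu \circ \beta^{-1}} = \alpha^{-1}(h^{-1}) \in M$, which establishes $(i) \Rightarrow (ii)$.

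The converse $(ii) \Rightarrow (i)$ is completely symmetric: given $\beta \in Aut\,M(\mu;\nu)$, set $\alpha := \beta^{-1}$ and run the analogous calculation (with the roles of $\mu$ and $\nu$ interchanged) to recover the $\alpha$-equivalence of $\mu$ and $\nu$ in the sense of Definition \ref{def1}, whence Theorem \ref{th2} yields the $*$-isomorphism of the log-algebras. I anticipate no substantial obstacle; the one point that warrants care is the uniqueness of the Radon--Nikodym derivative as a central element of $L_{0}(M,\mu)$, which is guaranteed by its characterization through the trace identity on $M^{+}$ (as already invoked in the preliminaries).
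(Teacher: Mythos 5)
Your proposal is correct and follows exactly the route the paper intends: the paper offers no proof of Theorem \ref{th3} at all, merely asserting it is Theorem \ref{th2} ``reformulated for the case $M=N$.'' Your additional care with the substitution $\beta=\alpha^{-1}$ and the transformation rule $\frac{d(\nu\circ\alpha)}{d\mu}=\alpha^{-1}\bigl(\frac{d\nu}{d\mu\circ\alpha^{-1}}\bigr)$ is a genuine improvement, since it bridges the mismatch between the condition in Definition \ref{def1} (which compares $\nu$ with $\mu\circ\alpha^{-1}$) and the defining condition of $Aut(M(\mu;\nu))$ (which compares $\mu$ with $\nu\circ\beta^{-1}$), a point the paper silently glosses over.
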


Let $\mu $ and $\nu $ be faithful normal semifinite traces on a von Neumann algebra $M$. Denote by $Z_{M} $ the center of the von Neumann algebra $M$, and by $\mu '$ and $\nu '$ the restrictions of the traces $\mu $ and $\nu $ to the center of $Z_{M} $, respectively. Since the automorphism $\alpha $ of the von Neumann algebra $M$ restricts to an automorphism on $Z_{M} $, the equality
\[\frac{d\mu }{d\nu \circ \alpha ^{-1} } =\frac{d\mu '}{d\nu '\circ \alpha ^{-1} } \]
implies the inclusion
\begin{equation}\label{eq8}
\left\{\alpha\left|_{Z_{M}} :\right. \alpha \in Aut(M\left(\mu ;\nu \right))\right\}\subset Aut(Z_{M} \left(\mu ';\nu '\right)).
\end{equation}

Suppose that
\[\alpha (L_{\log } \left(M;\mu \right))=L_{\log } \left(M;\nu \right)\]
i.e. $Aut(M\left(\mu ;\nu \right))\ne \emptyset $. It follows that
\[\left\{\alpha \left|_{Z_{M}} :\right. \alpha \in Aut(M\left(\mu ;\nu \right))\right\}\ne \emptyset \]
Therefore, from inclusion \eqref{eq8} we obtain that $Aut(Z_{M}\left(\mu '; \nu '\right))\ne \emptyset$. From these considerations, we obtain the following assertion.

\begin{proposition}
If log-algebras $L_{\log } \left(M;\mu \right)$ and $L_{\log } \left(M;\nu \right)$ are *-isomorphic, then log-algebras $L_{\log } \left(Z_{M} ;\mu '\right)$ and $L_{\log } \left(Z_{M} ;\nu '\right)$ are also *-isomorphic. The converse is not always true.
\end{proposition}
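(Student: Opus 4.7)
The plan for the forward implication is to unwind Theorem \ref{th3} on both sides of the restriction map $M\to Z_M$. Starting from a $*$-isomorphism $L_{\log}(M;\mu)\cong L_{\log}(M;\nu)$, Theorem \ref{th3} furnishes some $\alpha\in Aut(M(\mu;\nu))$. I would then restrict $\alpha$ to the centre to obtain a normal $*$-automorphism $\beta:=\alpha|_{Z_M}$ of $Z_M$ and invoke the identity
\[\frac{d\mu}{d\nu\circ\alpha^{-1}}=\frac{d\mu'}{d\nu'\circ\alpha^{-1}}\]
recorded in the text just before \eqref{eq8} (valid because Radon--Nikodym derivatives of traces are already central, and $\alpha^{-1}$ restricts to $\beta^{-1}$ on $Z_M$). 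This identity is exactly what is needed to push the boundedness condition defining $Aut(M(\mu;\nu))$ down to the one defining $Aut(Z_M(\mu';\nu'))$, so $\beta\in Aut(Z_M(\mu';\nu'))$, and a second application of Theorem \ref{th3} in the commutative setting yields the desired isomorphism of centre log-algebras.

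For the converse I would build an explicit counterexample exploiting the fact that $Z_M$ typically admits many more normal $*$-automorphisms than $M$ itself. My plan is to take $M=\bigoplus_{n\ge 1}M_n(\mathbb{C})$: because its direct summands have pairwise distinct dimensions, every normal $*$-automorphism of $M$ preserves each minimal central projection $p_n$, whereas $Z_M\cong\ell^\infty(\mathbb{N})$ admits every permutation of $\mathbb{N}$ as a normal $*$-automorphism. I then choose two faithful normal semifinite traces $\mu=c_n^\mu\operatorname{tr}_n$ and $\nu=c_n^\nu\operatorname{tr}_n$ so that the ratio $c_n^\nu/c_n^\mu$ is neither bounded above nor bounded away from zero, which rules out full-algebra isomorphism via Theorem \ref{th3}; at the same time I arrange that some permutation $\sigma$ of $\mathbb{N}$ makes the centre ratio $c_n^\mu\,n/(c_{\sigma^{-1}(n)}^\nu\,\sigma^{-1}(n))$ bounded with bounded inverse. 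A concrete working choice is $c_n^\mu=n^2,\ c_n^\nu=1$ for even $n$ and $c_n^\mu=1,\ c_n^\nu=n^2$ for odd $n$, together with $\sigma$ swapping $2k-1\leftrightarrow 2k$; the resulting centre ratios tend to $1$ as $k\to\infty$, while on $M$ the pointwise Radon--Nikodym derivative fluctuates between $n^2$ and $n^{-2}$.

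The forward direction is routine bookkeeping. The conceptual obstacle lies in the converse, and it has the following shape: one must verify that no normal $*$-automorphism of $\bigoplus_n M_n(\mathbb{C})$ can permute summands of different dimensions, so that the extra symmetries available on $Z_M$ genuinely have no lift to $M$. Once this rigidity is in place, checking (a) failure of the Radon--Nikodym boundedness condition on $M$ and (b) its success on $Z_M$ via $\sigma$ reduces to the two short estimates sketched above, and the separation between the two isomorphism conditions becomes transparent.
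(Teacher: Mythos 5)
Your forward implication is exactly the paper's argument: restrict the automorphism supplied by Theorem \ref{th3} to the centre, use the identity $\frac{d\mu}{d\nu\circ\alpha^{-1}}=\frac{d\mu'}{d\nu'\circ\alpha^{-1}}$ to get the inclusion \eqref{eq8}, and apply Theorem \ref{th3} again on $Z_M$; nothing to add there. For the converse the underlying principle is the same as the paper's Example~1 --- automorphisms of a direct sum of type $I_n$ pieces with pairwise non-isomorphic summands cannot permute the summands, while the centre admits the extra permutation symmetry --- but your realization is genuinely different and in some respects cleaner. The paper takes $M=(Z_1\otimes M_1)\oplus(Z_2\otimes M_2)$ with $M_1,M_2$ factors of distinct finite type, \emph{posits} traces $\mu_1,\nu_1$ on the first summand with $Aut\,M^{I}(\mu_1;\nu_1)=\emptyset$ (the text even contains a sign slip, first writing $\ne\emptyset$), and transports them across an isomorphism $\pi:Z_1\to Z_2$ so that the flip automorphism of $Z_1\oplus Z_2$ witnesses centre-equivalence. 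You instead take $M=\bigoplus_{n}M_n(\mathbb{C})$ with fully explicit weights $c_n^{\mu},c_n^{\nu}$: since every automorphism fixes each minimal central projection and acts innerly (hence trace-preservingly) on each $M_n(\mathbb{C})$, the obstruction on $M$ reduces to the unboundedness of $c_n^{\mu}/c_n^{\nu}$, while the pairing permutation $\sigma$ makes the centre ratios $(2k)^3/(2k-1)^3$ and $(2k-1)/(2k)$ tend to $1$ --- I checked these and they are correct. What your version buys is complete explicitness (no appeal to the existence of non-equivalent traces on $Z_1\otimes M_1$) and a transparent identification of the rigidity being exploited; what it costs is that the centre $\ell^{\infty}(\mathbb{N})$ is atomic, so you must (as you do) argue via Theorem \ref{th3} in the commutative case rather than via the passport criterion of Theorem \ref{th5}, which is stated only for non-atomic Boolean algebras. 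The one point you flag as needing verification --- that no $*$-automorphism of $\bigoplus_n M_n(\mathbb{C})$ can exchange summands of different dimension --- is standard (automorphisms permute minimal central projections and preserve the isomorphism type of the corresponding factors), so there is no real gap.
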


\textbf{Example 1.} Let
\[M=Z_{1} \otimes M_{1} \otimes Z_{2} \otimes M_{2} \]
where $Z_{1} $ and $Z_{2} $ are *-isomorphic commutative von Neumann algebras, and $M_{1} $ and $M_{2} $ are factors of type $I_{n_1}$ and $I_{n_2}$, respectively. Denote by $M^{I} =Z_{1} \otimes M_{1} $ and by $M^{II} =Z_{2} \otimes M_{2} $. Then it follows from \cite[Corollary 2.3.4]{10} that any automorphism $\beta: M \to M$ maps $M^{I}$ to $M^{I}$ and maps $M^{II}$ to $M^{II}$.

\sloppy
On the von Neumann algebra $M^{I} $ we consider faithful normal semifinite traces $\mu _{1} $ and $\nu _{1} $ such that $Aut\, M^{I} \left(\mu _{1} ;\nu _{1} \right)\ne \emptyset $. We denote the *-isomorphism of the algebras $Z_{1} $ and $Z_{2} $ by $\pi $ and the restrictions of the traces $\mu _{1} $ and $\nu _{1} $ to $Z_{1} $ by $\mu '_{1} $ and $\nu '_{1} $, respectively. Then on $Z_{2} $ one can define exact normal semi-finite traces $\mu '_{2} =\nu '_{1} \circ \pi ^{-1} $ and $\nu '_{2} =\mu '_{1} \circ \pi ^{-1} $.

Let $Z_{M} =Z_{1} \oplus Z_{2} $ and $\mu '=\mu '_{1} \oplus \mu '_{2}, \ \nu '=\nu '_{1} \oplus \nu '_{2} $ -- exact normal semifinite traces on $Z_{M} $. Let $x_1\in Z_1$ and $x_2 \in Z_2$, i.e. $(x_1,x_2)\in (Z_1,Z_2)=Z_M$. Consider a map $\alpha$ on $Z_{M}$ such that $\alpha ^{-1} \left(x_{1} ;x_{2} \right)=\alpha \left(x_{1} ; x_{2} \right)=\left(\pi ^{-1} x_{2} ;\pi x_{1} \right)$. Obviously, $\alpha$ will be an automorphism on $Z_M$. Then from the equalities \[\begin{array}{l} {\nu '\circ \alpha ^{-1} \left(x_{1} \oplus x_{2} \right)=\nu '\left(\ pi ^{-1} \left(x_{2} \right)\oplus \pi \left(x_{1} \right)\right)=\nu '_{1} \left(\pi ^{-1 } \left(x_{2} \right)\right)\oplus \nu '_{2} \left(\pi \left(x_{1} \right)\right)=} \\ {\, \, \, \, \, \, \, \, \, =\nu '_{2} \circ \pi \left(x_{1} \right)\oplus \nu '_{1} \circ \pi ^ {-1} \left(x_{2} \right)=\mu '_{1} \left(x_{1} \right)\oplus \mu '_{2} \left(x_{2} \right)=\mu '\left(x_{1} \oplus x_{2} \right)} \end{array}\] we obtain that \[\frac{d\mu '}{d\nu '\circ \alpha ^{- 1} } =1,\] i.e. $Aut\, Z_{M} \left(\mu ';\nu '\right)\ne \emptyset $. From this it follows from theorem \ref{th2} that the log-algebras $L_{\log } \left(Z_{M} ;\mu '\right)$ and $L_{\log } \left(Z_{M} ; \nu '\right)$ *-isomorphic.

\noindent Now let $\mu _{2} $ and $\nu _{2} $ be the extensions of the exact normal semifinite traces $\mu '_{2} $ and $\nu '_{2} $ from $Z_{2} $ to $M^{II} $, respectively. Then $\mu =\mu _{1} \oplus \mu _{2} $ and $\nu =\nu _{1} \oplus \nu _{2} $ are exact normal semifinite traces on $M$. From the equality $Aut\, M^{I} \left(\mu _{1} ;\nu _{1} \right)=\emptyset $ it follows that
\[AutM\left(\mu ;\nu \right)\subset Aut\, M\left(\mu _{1} \oplus 0;\nu _{1} \oplus 0\right)\ne \emptyset \]
where 0 is the zero trace on $M^{II} $. In the last inclusion, in particular, the fact is used that any *-automorphism of the algebra $M$ maps $M^{I} $ to $M^{I} $. Thus, we have obtained that the log-algebras $L_{\log } \left(M;\mu \right)$ and $L_{\log } \left(M;\nu \right)$ are not *-isomorphic, while the corresponding log-algebras on the center are *-isomorphic.

From Theorem \ref{th2} and the equality $\frac{d\mu }{d\nu } =\frac{d\mu '}{d\nu '}$
the following theorem we obtain

\begin{theorem}\label{th4} The *-isomorphism of the log-algebras $L_{\log } \left(Z_{M} ;\mu '\right)$ and $L_{\log } \left(Z_{M} ;\nu '\right)$ implies the *-isomorphism of the log-algebras $L_{\log } \left(M;\mu \right)$ and $L_{\log } \left(M;\nu \right)$ if and only if at least one *-automorphism of $Aut\, Z_{M} \left(\mu ';\nu '\right)$ can be extended to an *-automorphism on $M$.
\end{theorem}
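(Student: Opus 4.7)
My plan is to reduce the biconditional to Theorem~\ref{th3} by replacing every *-isomorphism statement with the non-emptiness of the corresponding $Aut$ set, and then to exploit the central-restriction identity $\frac{d\mu}{d\nu}=\frac{d\mu'}{d\nu'}$ recorded immediately before the theorem together with the fact that every *-automorphism of $M$ preserves $Z_M$.

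For the sufficiency, I would take $\alpha\in Aut\,Z_M(\mu';\nu')$ together with an extension $\tilde\alpha\in Aut(M)$. Since $\tilde\alpha(Z_M)=Z_M$ and $\tilde\alpha|_{Z_M}=\alpha$, the restriction of $\nu\circ\tilde\alpha^{-1}$ to $Z_M$ equals $\nu'\circ\alpha^{-1}$. Applying the central-restriction identity to the pair $(\mu,\nu\circ\tilde\alpha^{-1})$ gives $\frac{d\mu}{d\nu\circ\tilde\alpha^{-1}}=\frac{d\mu'}{d\nu'\circ\alpha^{-1}}$, which is bounded together with its reciprocal by the hypothesis $\alpha\in Aut\,Z_M(\mu';\nu')$. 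Hence $\tilde\alpha\in Aut\,M(\mu;\nu)$, and Theorem~\ref{th3} produces the desired *-isomorphism $L_{\log}(M;\mu)\cong L_{\log}(M;\nu)$.

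For the necessity, I would restrict attention to the non-trivial case in which $L_{\log}(Z_M;\mu')\cong L_{\log}(Z_M;\nu')$, since otherwise the implication in the theorem is vacuous. Under this hypothesis the assumed implication yields $L_{\log}(M;\mu)\cong L_{\log}(M;\nu)$, so Theorem~\ref{th3} supplies some $\beta\in Aut\,M(\mu;\nu)$. Its restriction $\beta|_{Z_M}$ is an automorphism of $Z_M$, and by the inclusion~\eqref{eq8} it already belongs to $Aut\,Z_M(\mu';\nu')$; by construction it extends to $M$ via $\beta$ itself, which is exactly the condition to be established.

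The step I expect to be the most delicate is the identification $\frac{d\mu}{d\nu\circ\tilde\alpha^{-1}}=\frac{d\mu'}{d\nu'\circ\alpha^{-1}}$. It combines three distinct ingredients: centrality of the Radon--Nikodym derivative between two traces on $M$, the identity $\frac{d\mu}{d\nu}=\frac{d\mu'}{d\nu'}$ stated in the paper, and the commutation of precomposition by $\tilde\alpha^{-1}$ with restriction to $Z_M$, which itself requires $\tilde\alpha$ to preserve the centre. Once this identification is in hand, both directions of the theorem follow almost formally from Theorem~\ref{th3}.
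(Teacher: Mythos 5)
Your proposal is correct and follows essentially the same route as the paper: both directions are reduced to Theorem~\ref{th3} via the sets $Aut\,M(\mu;\nu)$ and $Aut\,Z_M(\mu';\nu')$, using the restriction inclusion \eqref{eq8} for necessity and the identity $\frac{d\mu}{d\nu\circ\tilde\alpha^{-1}}=\frac{d\mu'}{d\nu'\circ\alpha^{-1}}$ for sufficiency. If anything, you are slightly more careful than the paper, which in the sufficiency direction only records that the Radon--Nikodym derivative lies in $L_0$ rather than explicitly concluding boundedness, and which does not address the vacuous case of the implication.
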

\sloppy

\begin{proof} If log-algebras $L_{\log } \left(M;\mu \right)$ and $L_{\log } \left(M;\nu \right)$ are *-isomorphic, then by Theorem \ref{th3} there exists a *-automorphism $\alpha \in AutM\left(\mu ;\nu \right)$.
The restriction $\alpha _{0} $ of this *-automorphism to $Z_{M} $
satisfies the condition $\alpha _{0} \in Aut(Z_{M} \left(\mu ';\nu '\right))$, since
\[\frac{d\mu }{d\nu \circ \alpha ^{-1} } =\frac{d\mu '}{d\nu '\circ \alpha ^{-1} } .\]

Therefore, the constructed *-automorphism $\alpha _{0} $ from $Aut\, Z_{M} \left(\mu ';\nu '\right)$ extends to an *-automorphism on $M$.

We now prove the converse implication. Let the *-automorphism \[\alpha _{0} \in Aut\, Z_{M} \left(\mu ';\nu '\right)\]
extend to an *-automorphism $\alpha $ on $M$. That this is not always the case follows from Example 1, i.e., not every *-automorphism on $Z_{M} $ extends to an *-automorphism on $M$. Since \[\frac{d\mu }{d\nu \circ \alpha ^{-1} } =\frac{d\mu '}{d\nu '\circ \alpha _{0} {}^{-1} } \in L_{0} \left(Z_{M} ;\nu '\circ \alpha _{0}^{-1} \right)\subset L_{0} \left(M;\nu \circ \alpha ^{-1} \right),\] then we obtain that \[\alpha \in AutM\left(\mu ;\nu \right),\] i.e. $L_{\log } \left(M;\mu \right)$ and $L_{\log } (M,\nu )$ are isomorphic.
\end{proof}

Let $X$ be an arbitrary complete Boolean algebra, $e\in X,$ $X_{e} =\{ g\in X:g\le e\} $. By $\tau (X_{e} )$, we denote the minimal cardinality of a set dense in $X_{e} $. An infinite complete Boolean algebra $X$ is called homogeneous if $\tau (X_{e} )=\tau (X_{g} )$ for any nonzero $e,g\in X.$ The cardinality $\tau (X)$ is called the weight of the homogeneous Boolean algebra $X$.

Let $\nabla $ be a complete nonatomic Boolean algebra and $\mu $ be a strictly positive countably additive $\sigma $-finite measure on $\nabla $. Then, the decomposition of $\nabla $ into homogeneous components is at most countable.

\begin{definition} Denote by $\{ \nabla _{s_{i} } \} $ the homogeneous components of the Boolean algebra $\nabla $ for which
\[\tau _{s_{i} } =\tau (\nabla _{s_{i} } )<\tau _{s_{i+1} } ,\; \mu (s_{i} )=\infty ,\]
and by $\{ \nabla _{u_{i} } \} $ we denote the homogeneous components of the Boolean algebra $\nabla $ for which $\tau _{u_{i} } =\tau (\nabla _{u_{i} } )<\tau _{u_{i+1} } ,\; \mu (u_{i} )<\infty .$ Here $s_{i} $ and $u_{i} $ are the units of the Boolean algebras $\nabla _{s_{i} } $ and $\nabla _{u_{i} } ,$ respectively. Then the matrix \[\left(\begin{array}{ccc} {\tau _{s_{1} } } & {\tau _{s_{2} } } & {\ldots } \\ {\tau _{u_{1} } } & {\tau _{u_{2} } } & {\ldots } \\ {\mu _{1} } & {\mu _{2} } & {\ldots } \end{array}\right),\] is uniquely defined, which we will call the \textit{passport} of the Boolean algebra $\nabla $ with $\sigma $-finite measure $\mu $.
\end{definition}

In particular, the concept of a two-line passport for finite measures was introduced in \cite{9}.

The Theorem 6 in \cite{2} establishes a necessary and sufficient condition for isomorphism of commutative $\log $-algebras.

\begin{theorem}\label{th5}
Let $\mu $ and $\nu $ be strictly positive $\sigma $-finite measures on non-atomic complete Boolean algebras $X$ and $Y$, respectively. Let $\left(\begin{array}{ccc} {\tau _{s_{1} } } & {\tau _{s_{2} } } & {\ldots } \\ {\tau _{u_{1} } } & {\tau _{u_{2} } } & {\ldots } \\ {\mu _{1} } & {\mu _{2} } & {\ldots } \end{array}\right)$  and  $\left(\begin{array}{ccc} {\tau _{s_{1} } } & {\tau _{s_{2} } } & {\ldots } \\ {\tau _{u_{1} } } & {\tau _{u_{2} } } & {\ldots } \\ {\mu _{1} } & {\mu _{2} } & {\ldots } \end{array}\right)$ are passports of Boolean algebras $(X,\mu )$ and $(Y,\nu ).$ Then the following conditions are equivalent:

$(i)$ $L_{\log } \left(X,\mu \right)$ and $L_{\log } \left(Y,\nu \right)$ are *-isomorphic;

$(ii)$ a) the first and second lines of the passports of Boolean algebras $\left(X,\mu \right)$ and $\left(Y,\nu \right)$ coincide;

b) the sequences $\mu _{n} \nu _{n}^{-1} $ and $\nu _{n} \mu _{n}^{-1} $ are bounded, where $\mu _{n} $ and $\nu _{n} $ are numbers from the third line of the passports of the Boolean algebras $\left(X,\mu \right)$ and $\left(Y,\nu \right)$, respectively.
\end{theorem}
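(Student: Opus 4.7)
The plan is to prove the equivalence by reducing both directions to Corollary \ref{cor1} applied in its commutative functional realization $L_{\log}(X,\mu)=L_{\log}(\Omega,\mu)$, combined with the standard classification of homogeneous $\sigma$-finite measure Boolean algebras (namely, two homogeneous components of the same weight and same total measure admit a measure-preserving Boolean isomorphism, and for infinite-measure components one has the freedom to scale).

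\textbf{Implication $(ii)\Rightarrow(i)$.} Given that the first two rows of the passports agree, I would build a Boolean isomorphism $\varphi\colon X\to Y$ component by component: send $\nabla_{s_i}(X)$ onto $\nabla_{s_i}(Y)$ using the classification in the infinite-measure case (choosing, say, the trivial rescaling so that $\varphi_{*}\mu=\nu$ on each infinite-measure component), and send $\nabla_{u_i}(X)$ onto $\nabla_{u_i}(Y)$ using that $\nabla_{u_i}(X)$ and $\nabla_{u_i}(Y)$ are homogeneous of the same weight $\tau_{u_i}$, so they are isomorphic as Boolean algebras, after which the pushforward $\varphi_{*}\mu$ on $\nabla_{u_i}(Y)$ differs from $\nu$ by the scalar factor $\mu_i\nu_i^{-1}$. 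This $\varphi$ lifts to a $*$-isomorphism $\Phi\colon L_{0}(X,\mu)\to L_{0}(Y,\varphi_{*}\mu)$ with $\Phi(L_{\log}(X,\mu))=L_{\log}(Y,\varphi_{*}\mu)$ (Proposition 3 of \cite{2}, already used in Theorem \ref{th2}). By construction $d(\varphi_{*}\mu)/d\nu$ takes only the values $1$ (on infinite-measure components) and $\mu_i\nu_i^{-1}$ (on the $i$-th finite-measure component), and similarly for $d\nu/d(\varphi_{*}\mu)$; the hypothesis that both sequences $\mu_n\nu_n^{-1}$ and $\nu_n\mu_n^{-1}$ are bounded gives that these Radon--Nikodym derivatives lie in $L_{\infty}(Y)$. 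Corollary \ref{cor1} then yields $L_{\log}(Y,\varphi_{*}\mu)=L_{\log}(Y,\nu)$, and composing with $\Phi$ produces the desired $*$-isomorphism.

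\textbf{Implication $(i)\Rightarrow(ii)$.} Conversely, any $*$-isomorphism $\Psi\colon L_{\log}(X,\mu)\to L_{\log}(Y,\nu)$ sends bounded elements to bounded elements (arguing as in the proof of Theorem \ref{th2}, since the bounded elements are precisely those whose $F$-norm can be scaled arbitrarily small while remaining in the algebra, a Banach-algebra characterisation independent of $\mu$), so $\Psi$ restricts to a $*$-isomorphism $L_{\infty}(X,\mu)\to L_{\infty}(Y,\nu)$ and induces a Boolean isomorphism $\varphi\colon X\to Y$. Since weight and total measure are Boolean invariants of a homogeneous component, $\varphi$ must map $\nabla_{s_i}(X)$ to a homogeneous component of $Y$ of weight $\tau_{s_i}$ and infinite measure, and $\nabla_{u_i}(X)$ to one of weight $\tau_{u_i}$ and finite measure; the prescribed increasing ordering of the weights then forces the first two rows of the passports to coincide. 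Using $\varphi$ to push $\mu$ forward to $\varphi_{*}\mu$ on $Y$, the commutative equality $L_{\log}(Y,\varphi_{*}\mu)=\Psi(L_{\log}(X,\mu))=L_{\log}(Y,\nu)$ combined with Corollary \ref{cor1} gives that $d(\varphi_{*}\mu)/d\nu$ and its inverse are bounded. On the $i$-th finite-measure component this Radon--Nikodym derivative equals the constant $\mu_i\nu_i^{-1}$ (the two measures are both scalar multiples of the unique, up to scaling, invariant measure on a homogeneous component of finite total measure), so the two sequences are bounded, yielding (ii).

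\textbf{Main obstacle.} The forward direction (i)$\Rightarrow$(ii) is the delicate one: one must justify first that a $*$-isomorphism between log-algebras really does restrict to one between the $L_{\infty}$-subalgebras (and hence to a Boolean isomorphism), and second that on each finite-measure homogeneous component the pushforward measure $\varphi_{*}\mu$ is automatically a scalar multiple of $\nu$, so that the Radon--Nikodym derivative reduces to the single number $\mu_i\nu_i^{-1}$. The infinite-measure components require separate care because there is no scalar to be matched there, and one must verify that the freedom inherent in choosing $\varphi$ within a homogeneous component of infinite measure can always be used so that $\varphi_{*}\mu=\nu$ holds on such components, ensuring that the boundedness conclusion in (ii) only concerns the third-row entries $\mu_n,\nu_n$.
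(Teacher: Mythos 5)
First, note that the paper does not prove Theorem~\ref{th5} at all: it is imported verbatim from \cite[Theorem 6]{2}, so there is no internal proof to compare against. Your overall strategy --- Maharam-type classification of homogeneous components plus the Radon--Nikodym boundedness criterion of Corollary~\ref{cor1} --- is the natural (and surely the intended) route, and the skeleton of both implications is sound. However, two of your justifications are genuinely wrong as stated, even though the conclusions you draw from them survive.

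First, your characterisation of bounded elements (``precisely those whose $F$-norm can be scaled arbitrarily small while remaining in the algebra'') characterises nothing: by part (c) of the Lemma, $\lim_{\alpha\to 0}\|\alpha T\|_{\log}=0$ for \emph{every} $T\in L_{\log}$. The correct argument is algebraic: a $*$-isomorphism preserves positivity and the unit (or local units), and a self-adjoint $a$ satisfies $\|a\|_\infty\le 1$ iff $1-a^2$ is of the form $b^*b$, so boundedness is preserved; this is also the (unproved) step in the paper's Theorem~\ref{th2}. Second, and more seriously, your claim that on the $i$-th finite-measure component $d(\varphi_*\mu)/d\nu$ is the constant $\mu_i\nu_i^{-1}$ because of ``the unique, up to scaling, invariant measure on a homogeneous component'' is false: Maharam's theorem classifies measure algebras, not measures on a fixed Boolean algebra, and a homogeneous component carries many mutually absolutely continuous, non-proportional strictly positive measures (e.g.\ $dx$ and $2x\,dx$ on $[0,1]$), possibly with unbounded densities. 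Fortunately the step is unnecessary: once Corollary~\ref{cor1} gives $h=d(\varphi_*\mu)/d\nu\le C$ and $h^{-1}\le C'$ in $L_\infty(Y)$, integrating $h$ over the unit of the $i$-th finite component yields $\mu_i=\int_{\varphi(u_i)}h\,d\nu\le C\nu_i$ and symmetrically $\nu_i\le C'\mu_i$, which is exactly (ii)(b). A similar small repair is needed for your phrase ``total measure is a Boolean invariant'': finiteness of $\mu(e)$ is detected not by the Boolean structure but by whether the projection $e$ lies in the log-algebra ($\|\chi_e\|_{\log}=\mu(e)\log 2$), and this is what forces $\varphi$ to match infinite-measure components with infinite-measure components.
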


\section{Criterion of isomorphism of $\log$-algebras constructed on von Neumann algebras of type $I_{n}$}

\begin{definition} A von Neumann algebra of type $I_{n} $ is a von Neumann algebra $M_{X} $ with the representation $M_{X} =L_{\infty } (X)\otimes B(H_{n} )$, where $B(H_{n} )$ is the algebra of $n\times n$ matrices \cite{10}. The center of $M_{X} $ is identified with $L_{\infty } (X)$. \end{definition}
For these von Neumann algebras, the $\log $-algebra will have the form
\[L_{\log } (M,\varphi )=L_{\log } (X,\mu )\otimes B(H_{n} ).\]
Let $M_{X} $ and $M_{Y} $ be von Neumann algebras of type $I_{n} $ with faithful normal semifinite traces $\varphi $ and $\eta $, respectively. Let $\mu $ and $\nu $ denote the restrictions of the traces of $\varphi $ and $\eta $ to the corresponding centers
\[L_{\log } (M,\varphi )=L_{\log } (X,\mu )\otimes B(H_{n} ),\]
\[L_{\log } (N,\eta )=L_{\log } (Y,\nu )\otimes B(H_{n} ).\]

From \cite[Theorem 2.3.3, Corollary 2.3.4, p. 89]{10} the following assertion follows.

\begin{proposition}\label{pr2}
Let $M_{X} $ and $M_{Y} $ be von Neumann algebras of type $I_{n} $. Then any isomorphism of their centers $Z(M_{X} )$ and $Z(M_{Y} )$ extends to an isomorphism from $M_{X} $ onto $M_{Y} $.

\end{proposition}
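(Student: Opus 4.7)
The plan is to construct the extension explicitly as a tensor product with the identity. Using the given representations $M_X=L_\infty(X)\otimes B(H_n)$ and $M_Y=L_\infty(Y)\otimes B(H_n)$, the centers are identified via $a\mapsto a\otimes 1$ with $L_\infty(X)$ and $L_\infty(Y)$, respectively. A $*$-isomorphism $\phi : Z(M_X) \to Z(M_Y)$ therefore corresponds to a $*$-isomorphism $\phi_0 : L_\infty(X) \to L_\infty(Y)$, and the natural candidate for the sought extension is $\tilde\phi := \phi_0 \otimes \mathrm{id}_{B(H_n)}$.

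More concretely, I would use the identification $L_\infty(X)\otimes B(H_n)\cong M_n(L_\infty(X))$, so that every element of $M_X$ is an $n\times n$ matrix $(a_{ij})$ with entries $a_{ij}\in L_\infty(X)$. The extension is then defined entrywise by $\tilde\phi((a_{ij})):=(\phi_0(a_{ij}))$. I would verify that $\tilde\phi$ is a $*$-homomorphism directly on the matrix level: linearity and $*$-preservation are immediate, while multiplicativity reduces to the fact that $\phi_0$ respects sums and products of its (commuting) entries. Bijectivity follows by applying the same construction to $\phi_0^{-1}$, which produces a two-sided inverse. Finally, $\tilde\phi|_{Z(M_X)}=\phi$ is immediate from the definition on elements of the form $a\otimes 1$.

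The main obstacle I anticipate is justifying that the entrywise prescription (equivalently, the algebraic tensor product map $\phi_0\otimes \mathrm{id}$) actually defines a $*$-isomorphism of the whole von Neumann algebra, rather than only of a weakly dense subalgebra. This is the point where one must use that $\phi_0$ is automatically normal (being a $*$-isomorphism of von Neumann algebras), so that $\phi_0\otimes\mathrm{id}_{B(H_n)}$ extends canonically from the algebraic tensor product to the W*-tensor product $M_X$. This normality argument, together with the uniqueness of the representation of a type $I_n$ algebra as $n\times n$ matrices over its center, is precisely the content of the cited results in \cite[Theorem 2.3.3, Corollary 2.3.4]{10}.
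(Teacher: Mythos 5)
Your proposal is correct, and it actually supplies a proof where the paper gives none: the paper simply asserts that the proposition ``follows from'' Sakai \cite[Theorem~2.3.3, Corollary~2.3.4]{10}, with no argument written out. Your explicit route --- identifying $M_X$ with $M_n\left(L_\infty(X)\right)$ and defining the extension entrywise as $\tilde\phi\left((a_{ij})\right)=(\phi_0(a_{ij}))$, i.e.\ $\phi_0\otimes\mathrm{id}_{B(H_n)}$ --- is exactly the construction underlying that citation, and it verifies everything the proposition claims: $\tilde\phi$ is a $*$-isomorphism onto $M_Y$, and its restriction to the diagonal copy of $L_\infty(X)$ is the given isomorphism of centers. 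One remark: the ``main obstacle'' you anticipate (passing from the algebraic tensor product to the W*-tensor product) does not actually arise here, because $B(H_n)$ is finite-dimensional, so $L_\infty(X)\otimes B(H_n)$ \emph{is} the algebraic tensor product $M_n\left(L_\infty(X)\right)$ and your entrywise formula is already defined on all of $M_X$; normality of $\tilde\phi$ is then automatic since any $*$-isomorphism between von Neumann algebras is normal. Where the citation to Sakai genuinely earns its keep is in justifying that an abstract type $I_n$ algebra has the representation $L_\infty(X)\otimes B(H_n)$ with center $L_\infty(X)$ in the first place --- but the paper's Definition already builds that representation into the hypotheses, so your argument is complete as it stands. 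In short: same underlying idea, but you make explicit what the paper outsources, which makes the statement self-contained at essentially no cost.
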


\begin{theorem}
The following conditions are equivalent:

$(i)$ The log algebras $L_{\log } (M_{X} ,\varphi )$ and $L_{\log } (M_{Y} ,\eta )$ are isomorphic;

The $(ii)$ Log-algebras $L_{\log } (X,\mu )$ and $L_{\log } (Y,\nu )$ are isomorphic;

$(iii)$ a) the first and second lines of the passports of the Boolean algebras $\left(X,\mu \right)$ and $\left(Y,\nu \right)$ coincide;

b) the sequences $\mu_{n} \nu_{n}^{-1} $ and $\nu _{n} \mu _{n}^{-1} $ are bounded, where $\mu_{n} $ and $\nu_{n} $ are numbers from the third line of the passports of the Boolean algebras $\left(X,\mu \right)$ and $\left(Y,\nu \right)$, respectively.
\end{theorem}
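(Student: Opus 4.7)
The plan is to prove the cycle (ii) $\Longleftrightarrow$ (iii) and (i) $\Longleftrightarrow$ (ii). The first equivalence is already content of Theorem \ref{th5} applied verbatim to the Boolean algebras $(X,\mu)$ and $(Y,\nu)$, so no new work is needed there. The essential content is the equivalence of (i) and (ii): for type $I_n$ algebras, the log-isomorphism problem reduces to the one for the central log-algebras. The strategy is to transfer $\alpha$-equivalences between centers and between full algebras using Proposition \ref{pr2} together with the observation, already used in Theorem \ref{th4}, that the Radon--Nikodym derivative is a central operator and therefore insensitive to whether it is computed on $M$ or on $Z_M$.

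For (i) $\Longrightarrow$ (ii), I would argue as in the proof of Theorem \ref{th2}. A $*$-isomorphism $\alpha'\colon L_{\log}(M_X,\varphi)\to L_{\log}(M_Y,\eta)$ carries bounded elements to bounded elements, so its restriction is a $*$-isomorphism $\alpha\colon M_X\to M_Y$ implementing an $\alpha$-equivalence of $\varphi$ and $\eta$, i.e.\ $\alpha\in\operatorname{Aut}(M_X(\varphi;\eta))$ in the obvious cross-algebra sense. Any $*$-isomorphism of von Neumann algebras maps center to center, so $\alpha$ restricts further to a $*$-isomorphism $\alpha_0\colon L_\infty(X)\to L_\infty(Y)$. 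Since $\frac{d\varphi}{d\eta\circ\alpha^{-1}}$ is central, it coincides with $\frac{d\mu}{d\nu\circ\alpha_0^{-1}}$ (and similarly for the inverse); both are bounded by assumption, so $\alpha_0$ witnesses the $\alpha$-equivalence of $\mu$ and $\nu$, and Theorem \ref{th2} gives (ii).

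For (ii) $\Longrightarrow$ (i), suppose $L_{\log}(X,\mu)\cong L_{\log}(Y,\nu)$. By Theorem \ref{th2} there exists a $*$-iso\-morphism $\alpha_0\colon L_\infty(X)\to L_\infty(Y)$ with $\frac{d\nu}{d\mu\circ\alpha_0^{-1}}$ and its inverse bounded. Proposition \ref{pr2} then lifts $\alpha_0$ to a $*$-isomorphism $\alpha\colon M_X\to M_Y$. Using the tensor decomposition $\varphi=\mu\otimes\operatorname{tr}$, $\eta=\nu\otimes\operatorname{tr}$ and the centrality of Radon--Nikodym derivatives, one checks
\[
\frac{d\varphi}{d\eta\circ\alpha^{-1}}=\frac{d\mu}{d\nu\circ\alpha_0^{-1}}\in L_\infty(Y)\subset M_Y,
\]
and the analogous identity for the reciprocal. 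Hence $\alpha\in\operatorname{Aut}(M_X(\varphi;\eta))$, so Theorem \ref{th2} (or Theorem \ref{th3}) yields (i).

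The step I expect to require most care is the identification $\frac{d\varphi}{d\eta\circ\alpha^{-1}}=\frac{d\mu}{d\nu\circ\alpha_0^{-1}}$ in the lifting direction: one has to verify that the element implementing the Radon--Nikodym derivative on $M_X$ is indeed the same central element that implements it on $Z(M_X)$, which uses both the factorization of $\varphi,\eta$ through the matrix trace on $B(H_n)$ and the fact that $\alpha$ extends $\alpha_0$ by Proposition \ref{pr2}. Once this identity is in place, everything else is routine application of Theorems \ref{th2} and \ref{th5}.
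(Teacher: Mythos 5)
Your proposal is correct and follows essentially the same route as the paper: the paper disposes of $(ii)\Leftrightarrow(iii)$ by citing Theorem \ref{th5} and of $(i)\Leftrightarrow(ii)$ by citing Theorem \ref{th4} together with Proposition \ref{pr2}, which is exactly the combination you unpack (lifting the central isomorphism via Proposition \ref{pr2} and using the centrality of the Radon--Nikodym derivative to identify $\frac{d\varphi}{d\eta\circ\alpha^{-1}}$ with $\frac{d\mu}{d\nu\circ\alpha_0^{-1}}$). Your version is more explicit than the paper's one-line proof, but contains no new ideas beyond it.
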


\begin{proof}
The equivalence of $(i)\Leftrightarrow (ii)$ follows from Theorem \ref{th4} and Proposition \ref{pr2}. The equivalence of $(ii)\Leftrightarrow (iii)$ follows from Theorem \ref{th5}.
\end{proof}

\bigskip

\end{document}